\newcommand{\R}{\mathbb{R}}
\renewcommand{\bar}{\overline}
\def\calL{{\mathcal L}}
\numberwithin{equation}{section}
\newtheorem{thm}{Theorem}[section]
\newtheorem{lem}[thm]{Lemma}
\newtheorem{prop}[thm]{Proposition}
\theoremstyle{remark}
\newtheorem{rem}{Remark}[section]
\newcommand{\Del}[1]{}
\def\eps{\varepsilon}
\begin{document}

\title[Unstable type II blow up]{Instability of type II blow up for the quintic nonlinear wave equation on $\R^{3+1}$.}

\author{Joachim Krieger, Joules Nahas}

\subjclass{35L05, 35B40}

\keywords{critical wave equation, hyperbolic dynamics,  blowup, scattering, stability, invariant manifold}

\thanks{Support of the Swiss National Fund for
the first author is gratefully acknowledged. He would like to thank the University of Chicago for its hospitality in August 2012}

\begin{abstract}
We prove that the blow up solutions of type II character constructed by Krieger-Schlag-Tataru \cite{KST} as well as Krieger-Schlag \cite{KS1} are unstable in the energy topology in that there exist open data sets whose closure contains the data of the preceding type II solutions and such that data in these sets lead to solutions scattering to zero at time $t = +\infty$. 
\end{abstract}

\maketitle 

\section{Introduction}

We consider the quintic focussing wave equation on $\R^{3+1}$, of the form 
\begin{equation}\label{eq:foccrit}
\Box u = u^5,\,\Box  = \partial_t^2 -\triangle
\end{equation}
in the radial context, i. e. $u(t, x) = v(t, |x|)$. 
This equation is of energy critical and focussing type and serves as a convenient model for more complicated energy critical models, such as Wave Maps in $2+1$ dimensions with positively curved targets, or Yang-Mills equations in $4+1$ dimensions as well as related problems of Schr\"odinger type. In fact, for example recent progress on \eqref{eq:foccrit} in \cite{DoKr} has led to analogous progress for the energy critical focussing NLS in $3+1$ dimensions, \cite{OP}. 
The focussing character of \eqref{eq:foccrit} leads to finite time blow up, which is most easily manifested by the explicit solutions of ODE type 
\[
u(t, x) = \frac{(\frac{3}{4})^{\frac{1}{4}}}{(T-t)^{\frac{1}{2}}}
\]
for arbitrary $T$. Truncating the data of these solutions at time $t = 0$ to force finiteness of $\int_{\R^3}[u_t^2(0, \cdot) + |\nabla_x u(0, \cdot)|^2]\,dx$, one easily verifies that 
\[
\lim_{t\rightarrow T}\int_{\R^3}[u_t^2(t, \cdot) + |\nabla_x u(t, \cdot)|^2]\,dx = +\infty
\]
One says the blow up is of type I. By contrast, a finite time blow up solution with 
\[
\limsup_{t\rightarrow T}\int_{\R^3}[u_t^2(t, \cdot) + |\nabla_x u(t, \cdot)|^2]\,dx < +\infty
\]
where $T$ is the blow up time is called of type II. Existence of the latter type of solution for \eqref{eq:foccrit}
 is rather subtle and appears to have first been accomplished in \cite{KST}, see also \cite{KS1}, and Hillairet-Rapha\"el's paper \cite{HR} for more stable blow up solutions in the $4+1$-dimensional context. The works \cite{KST}, \cite{KS1} show that denoting 
\[
u_{\lambda}(t, x): = \lambda^{\frac{1}{2}} u(\lambda t, \lambda x),\,\lambda>0
\]
problem \eqref{eq:foccrit} admits type II blow up solutions (of energy class) of the form 
\begin{equation}\label{eq:typeIIblow}
u(t, x) = W_{\lambda(t)}(x) + \eps(t, x),\,\lambda(t) = (-t)^{-1-\nu},\,t\in [-t_0, 0),\,t_0\lesssim 1, 
\end{equation}
for $\nu>0$, with $W(x)$ denoting the ground state static solution 
\[
W(x) = \frac{1}{\big(1+\frac{|x|^2}{3}\big)^{\frac{1}{2}}}
\]
 More precisely, the solutions constructed in \cite{KST}, \cite{KS1} admit a precise description of the radiation term $\eps(t, x)$ inside the light cone $\{r\leq |t|\}$ of the form 
\[
\eps(t, x) = O\left ( \lambda^{\frac{1}{2}}(t)\frac{R}{(\lambda(t) |t|)^2} \right ),\,R = \lambda(t)|x|,
\]
and furthermore $\eps(t,\cdot)\in \dot{H}^1$ with 
\[
\|(\eps, \eps_{t})\|_{(\dot{H}^1\times L^2)(|x|\leq t)}\lesssim (\lambda(t) |t|)^{-\frac{1}{2}}
\]
By contrast, outside of the light cone, we can only assert that 
\[
\|\nabla_{t,x}\eps\|_{L^2(|x|\geq t)}\leq \delta_*
\]
where we may arrange for $\delta_*$ to be arbitrarily small. Indeed, this is consistent with the fact proved in \cite{DKM1} that type II blow up solutions must have energy strictly larger than that of the ground state. 
\\
We also mention that analogous {\it{infinite time blow up solutions}} were constructed in \cite{DoKr}. See also \cite{HR} for type II blow up with a different rate for the energy critical NLW in $4+1$ dimensions. 
\\

The remarkable series of papers \cite{DKM1} - \cite{DKM4} recently gave a complete classification of the possible type II solutions, on finite or infinite time intervals, in the radial context for \eqref{eq:foccrit}. These works show that any type II solution decouples as a sum of dynamically rescaled ground states $\pm W$ at diverging scales, plus an error that remains regular at blow up time (or radiates to zero in the infinite time case). In these works, it is intimated that all such type II solutions ought to be unstable in the energy topology, and in fact ought to constitute the boundary of both the set of solutions existing globally and scattering to zero, as well as those blowing up of type I. Indeed, it is only the latter two which are readily observable in numerical experiments.  
\\
The recent work \cite{CNLW3}  gives a rather precise description of the instability of the static solution $W$ with respect to a suitably strong topology. 
\\
Here, we show that the solutions constructed in \cite{KST}, \cite{KS1} are unstable in the energy topology, provided $\eps$ has sufficiently small energy.
Specifically, we have 
\begin{thm}\label{thm:main}There exists $\delta_*>0$ with the following property: let $u(t,x)$ be one of the type II blow up solutions constructed in \cite{KST}, \cite{KS1}
\begin{equation}\label{eq:generaltypeII}
u(t, x) = W_{\lambda(t)}(x) + \eps(t, x),\,\lambda(t) = (-t)^{-1-\nu}
\end{equation}
satisfying the a priori condition 
\[
\limsup_{t\in [-t_0, 0)}\|\nabla_{t,x}\eps(t, \cdot)\|_{L_x^2} < \delta_*,\,t_0\lesssim 1
\]
Then there exists an open set of data $U$ in the energy topology at time $-t_0$, with 
\[
\big(u(-t_0, \cdot),  u_t(-t_0, \cdot)\big) \in \bar{U},
\]
and such that all data in $U$ lead to solutions existing globally and scattering to zero in forward time. Also, there is an open set of data $V$ with 
\[
\big(u(-t_0, \cdot),  u_t(-t_0, \cdot)\big) \in \bar{V},
\]
with the property that all data in $V$ lead to finite time blow up. 

\end{thm}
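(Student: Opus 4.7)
The natural strategy is to exploit the fact, classical by now, that the operator $L_W := -\Delta - 5W^4$ linearized around the ground state admits a single unstable eigenvalue $-k^2 < 0$ with smooth radial eigenfunction $g_d$, and is otherwise spectrally stable (with a generalized kernel spanned by the scaling direction $\Lambda W$). One would then implement a finite co-dimensional unstable manifold analysis in the spirit of the work of Krieger-Nakanishi-Schlag and of \cite{CNLW3}, but now centered on the moving bubble $W_{\lambda(t)}(x)$ rather than the static ground state.

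The plan is as follows. First, given a perturbed Cauchy datum at time $-t_0$ close to $(u(-t_0,\cdot), u_t(-t_0,\cdot))$, I would decompose the solution $\tilde u$ for as long as it remains close to the soliton family as $\tilde u(t,x) = W_{\mu(t)}(x) + w(t,x)$, with $\mu(t)$ chosen by a modulation/orthogonality condition forcing $w(t,\cdot) \perp \Lambda W_{\mu(t)}$ in a suitable inner product. Linearizing the equation for $w$ around $W_{\mu(t)}$ gives a forced wave equation governed by (a rescaled version of) $L_W$, with forcing terms controlled by the derivative of $\mu(t)$ and by the quadratic and higher nonlinearity in $w$. Because the bulk of the remainder $\eps$ constructed in \cite{KST,KS1} is concentrated inside the light cone and is $\dot H^1$-small at the soliton scale $\lambda(t)^{-1}$, the linearized operator effectively behaves as $L_{W_{\mu(t)}}$ up to small errors.

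Next, project $w$ onto the unstable mode: write $w = \alpha_+(t) g_d^{\mu(t)} + \alpha_-(t) \tilde g^{\mu(t)} + w_{\rm stab}(t)$ where $g_d^{\mu}, \tilde g^{\mu}$ are the rescaled eigenmodes spanning the unstable/stable one-dimensional invariant subspaces of the first-order linearized evolution. The scalars then satisfy
\[
\dot\alpha_+(t) = k\mu(t)\alpha_+(t) + N_+(t), \qquad \dot\alpha_-(t) = -k\mu(t)\alpha_-(t) + N_-(t),
\]
with $N_\pm$ controlled by the stable part $w_{\rm stab}$ and by the size of $\dot\mu/\mu$. For the stable part one has an energy estimate showing boundedness (and even smallness) under a suitable spectral projection. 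By a standard shooting/fixed point argument, one then identifies a codimension-one Lipschitz submanifold of data at $t=-t_0$ on which $\alpha_+(t)$ remains finite for all $t<0$; on this manifold the evolution continues to resemble a type II blow-up near $W_{\mu(t)}$. The datum of the original KST/KS1 solution lies on this submanifold by construction.

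To produce the open sets $U,V$, I would push the data off the center-stable manifold in the two transverse directions: data for which $\alpha_+(-t_0)$ is increased by $+\eta$ lead, via the exponential growth of $\alpha_+$, to the solution leaving a small tubular neighborhood of the soliton family; at that exit point one proves, using the Kenig-Merle/DKM classification together with the a priori bound that the solution's energy is not far above that of $W$, that the only options are scattering or type I blow-up, and one uses the sign of the exit mode (together with the structure of the blow-up/ejection dynamics developed in \cite{CNLW3}) to show that $+\eta$ yields finite time type I blow-up, while $-\eta$ yields global existence and scattering. Openness in the energy topology then follows from the continuous dependence of the projections on the data and from the openness of the scattering/type I blow-up regimes.

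The principal difficulty I anticipate is not the finite-dimensional spectral bookkeeping but the control of the linearized evolution around the moving bubble in the presence of the non-small exterior piece of $\eps$: the linearized operator is time-dependent with coefficients that vary on the fast scale $\lambda(t) = (-t)^{-1-\nu}$, and the transfer of dispersive/Strichartz estimates from the static problem requires careful use of finite speed of propagation (to exploit smallness of $\eps$ inside the cone) and of the adiabatic nature of the bubble concentration. Ensuring that the quadratic source terms and the modulation errors are integrable against the exponentially growing weight tied to $k\int \mu(s)\,ds$ will be the main technical balancing act; here the hypothesis that $\delta_*$ is small is used in an essential way to close the bootstrap.
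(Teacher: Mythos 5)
Your proposal takes a genuinely different route from the paper, and while the broad philosophy (perturb in the unstable direction, observe ejection, apply a dichotomy result) is the same, the implementation you sketch has gaps that the paper's argument is specifically designed to sidestep.

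The paper does \emph{not} set up a modulation decomposition $\tilde u = W_{\mu(t)} + w$ with a moving parameter $\mu(t)$ fixed by orthogonality, and it does \emph{not} construct a center-stable manifold or run a shooting argument over $[-t_0, 0)$. Instead it constructs the perturbed solutions directly, as a one-parameter family indexed by $a = \pm\delta_1^{\nu N}$: one inserts by hand the exactly tuned term $\eta_{\mathrm{hyp}} = a\, e^{-k_d(\tau_1-\tau)}g_0(R)$ (which is exponentially tiny at $\tau=\tau_0$ and reaches size $|a|$ only at $\tau=\tau_1$), and then solves for an additional correction $\eta$ on $[-t_0,-t_1]$ via the KST distorted-Fourier iteration with data imposed at $\tau_1$, integrating \emph{backward} (Proposition~2.1). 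This is the crucial structural idea your proposal misses: the ejection is deliberately delayed until the time $-t_1=\delta_1$, because only past that time are the marginal changes of $\lambda(t)$, viewed in the rescaled coordinates frozen at $\lambda(-t_1)$, small enough over the $O(|\log\delta_1|)$ window that the ejection Lemma~3.1 (a bootstrap closing both the $g_0$-projection $\delta(t)$ and the orthogonal remainder $\tilde\eta_c$ against the \emph{fixed} linearized operator $\mathcal H=-\triangle-5W^4$) can close. Your plan to push off a center-stable manifold already at $t=-t_0$ and bootstrap $\alpha_\pm$ with forcing controlled by $\dot\mu/\mu$ runs into exactly the difficulty you flag at the end but do not resolve: over $[\tau_0,\infty)$ the unstable mode grows like $e^{k_d\tau}$ while the forcing generated by $\eps$ and by $\dot\mu/\mu$ decays only polynomially in $\tau$, and there is no reason the bootstrap should close uniformly up to the concentration time. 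The paper's split into a backward construction on $[\tau_0,\tau_1]$ (where the excitation is still exponentially small relative to the forcing, so the KST iteration absorbs it) and a forward ejection on a bounded rescaled interval past $\tau_1$ is what makes the quantitative balancing work.

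Two further discrepancies. First, the dichotomy is not obtained from the Kenig--Merle/DKM classification: the paper invokes the hyperbolic-dynamics machinery of \cite{CNLW1} (the ejection Lemma~2.2 there, equation~(3.44), and Propositions~5.1, 6.2), applied after Proposition~3.2 produces an exit configuration with $\langle \tilde v_{\alpha_T}, g_{\alpha_T}\rangle\sim b e^{k_d T}$ of a definite sign and $\langle\tilde v_{\alpha_T},\Lambda^* g_{\alpha_T}\rangle=0$; it is the sign of this projection combined with the one-pass theorem that determines scattering versus blow-up. Second, you claim that the sign $+\eta$ yields \emph{type I} blow-up. The paper proves only finite-time blow-up for one sign, and the Remark after Theorem~1.1 explicitly records that identifying type I blow-up inside $V$ is left open. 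Finally, the open sets $U,V$ are produced not by openness of a shooting target but by continuous dependence on the data at $t=-t_0$ together with the quantitative separation $\mathrm{dist}(\tilde u[T],\mathcal S\cup -\mathcal S)\sim\delta_0\gg\delta_*$ at the exit time; this is a soft consequence of the explicit one-parameter family, not of a manifold structure.
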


\begin{rem}It remains to show that any open subset of the set of blow up data $V$ contains data leading to type I blow up.
\end{rem}

The preceding theorem is quite similar in character to an earlier seminal result by Merle, Rapha\"el and Szeftel\cite{MeRaSz}, where a similar instability phenomenon for the pseudo-conformal blow up solutions constructed by Bourgain-Wang is shown in the context of the $L^2$-critical NLS equation. This is in fact in some sense a more delicate phenomenon than the one exhibited in our paper, since there is no {\it{exponential instability}} that can be exploited in the context of \cite{MeRaSz}. 
\\
We also note that while the blow up solutions whose instability is shown in our paper correspond to a continuum of rates, such a phenomenon has not been observed for the $L^2$-critical NLS, and in fact the Bourgain-Wang solutions all have the same blow up rate. This is in contrast to the {\it{energy-critical focussing NLS}}, where a continuum of blow up rates is possible at least at time $t=+\infty$(and presumably also at finite time), see \cite{OP}. 
Nonetheless, for all these models, including the one in our present paper, there is conjectured to be a {\it{stable blow up rate}} which is numerically observable, as well as a  number of unstable ones. 
For our equation $\Box u = -u^5$, this is conjectured to be the rate exhibited by the explicit ODE-type blow up. In fact, stability of the explicit ODE-blow up solutions has been shown in \cite{DoSch}. 
\\
One may wonder if the {\it{continuum of rates}} possible for blow up solutions for energy critical problems is tied to the topology one works in, and more precisely, if imposing $C^\infty$-smoothness on the data will remove most of these, leaving a {\it{quantized set}} of blow up solutions corresponding to special quantized values quantized of the blow up speed. The lowest blow up rate would then conjecturally correspond to stable blow up, while all the higher ones would be unstable, according to the mechanism exhibited in our paper. Such a picture was developed for the harmonic map heat flow in \cite{RaSchw}. 
\\

The idea of the proof of Theorem~\ref{thm:main} is as follows: Fixing a smallness parameter $\delta_1>0$, which quantifies the smallness of the perturbation applied to the explicit type II blow up solution,  we intend to apply the ejection type argument of \cite{CNLW1} at some time $-t_1>-t_0$, $t_1 = t_1(\delta_1)$, by applying a suitable small excitation in the unstable direction of the linearization around $W$. The reason that we can only implement the ejection at time $-t_1$ comes from the fact that we need to ensure that the scaling parameter $\lambda(t)$ experiences sufficiently small marginal changes\footnote{This may be the reason why the type II blow up here is also referred to as slow blow up} in re-scaled coordinates past time $-t_1$, depending on $\delta_1$.  Specifically, we shall arrange that the solution exits a suitable small neighborhood of $\mathcal{S}: = \{W_{\lambda}\}_{\lambda>0}$ in forward time past $t = -t_1$ very quickly. On the other hand, we also need to arrange that this perturbed solution {\it{remains stable on $[-t_0, -t_1]$}} and indeed is essentially $\delta_1$-close to $u$ at $t = -t_0$. In fact, the solutions thus constructed will be a one parameter family (parametrized by $\delta_1$), but a simple perturbative argument then gives the desired open set of solutions $U$ {\it{at time $t = -t_0$}}. 
In fact, it is assuring that the perturbed solution remains close to $u(t, \cdot)$ on $[-t_0, -t_1]$ which causes most of the difficulties, and forces us to exploit the precise structure of the solutions constructed in \cite{KST}, \cite{KS1}. 
\\
We observe here that the construction in this paper appears to be of much wider applicability, and in particular ought to be able to handle instability of blow up solutions with rates much closer to $t^{-1}$, such as the logarithmic type corrections considered in \cite{HR}. 

\section{Constructing a stable solution on $[-t_0, -t_1]$.}

Our point of departure are the solutions 
\[
u_{II}(t, x): = W_{\lambda(t)}(x) + \eps(t, x),\,t\in [-t_0, 0)
\]
constructed in \cite{KST}, \cite{KS1}. We aim at perturbing these on an  interval $[-t_0, -t_1]$ with $t_1 = \delta_1\ll t_0$. Let 
\[
\mathcal{H}: = -\triangle - 5W^4
\]
and let $g_0$ be its unique negative eigenstate, see e. g. \cite{KS}. Introducing the re-scaled variables 
\[
\tau = \frac{1}{\nu}(-t)^{-\nu},\, R = \lambda(t)r,
\]
it is then natural to consider the perturbed approximate solution 
\[
\tilde{u}_{II}(t, x): = W_{\lambda(t)}(x) + \eps(t, x) + \eta_{hyp}(t, x)
\]
where we put 
\[
\eta_{hyp}(t, x) = a e^{-k_d(\tau_1 - \tau)}g_0(R),\,\tau_i =  \frac{1}{\nu}t_i^{-\nu},\,i = 0,1,
\]
with $a = \pm \delta_1^{\nu N}$ for some large $N$ to be chosen. Also, $-k_d^2$ is the unique negative eigenvalue of $\mathcal{H}$, with corresponding eigenmode $g_0$. Our first problem is to show that this can be completed to an exact solution on $[-t_0, -t_1]$, by adding a suitable $\eta(t, x)$. In effect, we shall work with 
\[
\tilde{\eta}(\tau, R):= R\eta(t, x)
\]
To find this function, we employ the Fourier theoretic methods developed in \cite{KST}. Thus, using terminology developed there, we write 
\[
\tilde{\eta}(\tau, R) = x_d(\tau)\phi_d(R) + \int_0^\infty x(\tau, \xi)\phi(R, \xi)\rho(\xi)\,d\xi,
\] 
where the function $\phi_d(R) = Rg_0(R)$ is the unique negative eigenmode associated with the operator 
\[
\mathcal{H}: = -\partial_R^2 - 5W^4(R),
\] 
while $\phi(R, \xi)$ constitutes the distorted Fourier basis. Also, $\rho(\xi)$ denotes the spectral measure associated with this operator.  By the corresponding Fourier inversion theorem, we have 
\[
x_d(\tau) = \int_0^\infty \tilde{\eta}(\tau, R) \phi_d(R)\,dR,\,x(\tau, \xi) = \int_0^\infty \tilde{\eta}(\tau, R) \phi(R, \xi)\,dR
\]
We shall use the $H^2_{dR}$ norm to control $\tilde{\eta}(\tau, R)$, which shall be handy in the section on the ejection process. From \cite{KST} we recall that 
\[
\|\tilde{\eta}(\tau, \cdot)\|_{H^2_{dR}}\lesssim \|\langle\xi\rangle x(\tau, \xi)\|_{L^2_{d\rho}} + |x_d(\tau)|
\]
\[
\|\tilde{\eta}(\tau, \cdot)\|_{H^1_{dR}}\lesssim \|\langle\xi\rangle^{\frac{1}{2}} x(\tau, \xi)\|_{L^2_{d\rho}} + |x_d(\tau)|
\]
In this section, we shall write 
\[
\|x(\tau, \cdot)\|_{S}: = \|\langle\xi\rangle x(\tau, \xi)\|_{L^2_{d\rho}}
\]
Also, for the source terms, we use the norm 
\[
\|x(\tau, \cdot)\|_{N}: =  \|\langle\xi\rangle^{\frac{1}{2}} x(\tau, \xi)\|_{L^2_{d\rho}}
\]

\begin{prop}\label{prop:stable}There exists $\tilde{\eta}(\tau, \cdot)$, $\tau\in [\tau_0, \tau_1]$, with 
\[
\|\tilde{\eta}(\tau, \cdot)\|_{H^2_{dR}}\lesssim \delta_1^{\nu}\tau^{-(N-1)},\,|P_{g_0}\tilde{\eta}(\tau, \cdot)|\lesssim \delta_1^{\nu}\tau^{-N}Rg_0(R),
\]
such that 
\[
\tilde{u}_{II} + \eta
\]
solves \eqref{eq:foccrit} on $[-t_0, -t_1]$. 
\end{prop}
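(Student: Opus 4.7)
The plan is to derive the PDE satisfied by $\eta$ and set up a Banach-space contraction in the self-similar variables of \cite{KST}. Since $u_{II}$ already solves \eqref{eq:foccrit} by construction, requiring $\tilde u_{II}+\eta$ to solve it as well forces
\[
(\Box-5W_{\lambda(t)}^4)\eta = -(\Box-5W_{\lambda(t)}^4)\eta_{hyp} + \mathcal{N}(\eta,\eta_{hyp},\eps),
\]
where the nonlinearity $\mathcal{N}$ collects the quadratic-and-higher terms in $\eta+\eta_{hyp}$ together with the cross contributions $20 W_\lambda^3\eps\,(\eta+\eta_{hyp})+O(\eps^2(\eta+\eta_{hyp}))$ produced by expanding $u_{II}^4-W_\lambda^4$. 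Working with $\tilde \eta=R\eta$, the linearised operator takes the KST form $(\partial_\tau+\beta(\tau)R\partial_R)^2+\mathcal{H}$ with $\beta(\tau)=-(1+\nu)/(\nu\tau)$, modulo lower-order corrections.

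The next step is to analyse the driving term $(\Box-5W_\lambda^4)\eta_{hyp}$. Because $\mathcal{H} g_0 = -k_d^2 g_0$ and $\eta_{hyp}$ is a pure exponential in $\tau$ with exponent $k_d$, the principal $(\partial_\tau^2+\mathcal{H})\eta_{hyp}$ piece cancels identically, leaving only (i) the transport commutators generated by the $\beta(\tau)R\partial_R$ terms, which carry an extra factor of $\tau^{-1}$ and are multiplied by the exponentially small prefactor $e^{-k_d(\tau_1-\tau)}$ away from $\tau_1$, and (ii) the cross terms $W_\lambda^3\eps\,\eta_{hyp}$, localised to $R=O(1)$ by the exponential decay of $g_0$ and therefore controlled by the precise description of $\eps$ inside the light cone recalled in the introduction. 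With $a=\pm\delta_1^{\nu N}$ one then checks that the resulting source $\tilde F$ satisfies $\|\tilde F(\tau,\cdot)\|_N\lesssim\delta_1^{\nu}\tau^{-N}$ uniformly on $[\tau_0,\tau_1]$, up to harmless logarithmic factors absorbed by enlarging $N$.

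Expanding $\tilde\eta=x_d(\tau)\phi_d(R)+\int_0^\infty x(\tau,\xi)\phi(R,\xi)\rho(\xi)\,d\xi$ reduces the PDE to the coupled system
\[
\ddot x_d-k_d^2 x_d = F_d(\tau)+\mathcal{K}_d[x_d,x],\qquad (\partial_\tau-2\beta(\tau)\xi\partial_\xi+\mathcal{K})^2 x+\xi^2 x = F(\tau,\xi)+\mathcal{K}'[x_d,x],
\]
where the transference operators $\mathcal{K}_d,\mathcal{K},\mathcal{K}'$ record the non-commutativity of the scaling generator with the distorted Fourier basis of $\mathcal{H}$. To ensure that $\eta$ does not re-introduce unstable excitation at the ejection time, I would impose the terminal conditions $x_d(\tau_1)=\dot x_d(\tau_1)=0$ and $x(\tau_1,\cdot)=\dot x(\tau_1,\cdot)=0$, solving the discrete equation by variation of constants against the hyperbolic pair $e^{\pm k_d\tau}$ and the continuous equation via the KST Duhamel parametrix. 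Packaging the resulting estimates in the Banach space with norm
\[
\|\tilde\eta\|_X:=\sup_{\tau\in[\tau_0,\tau_1]}\Bigl(\tau^{N}|x_d(\tau)|+\tau^{N-1}\|x(\tau,\cdot)\|_S\Bigr)
\]
one closes a standard contraction: the nonlinearity $\mathcal{N}$ is controlled by the algebra properties of $S$, the radial embedding $H^2_{dR}\hookrightarrow L^\infty$, and the smallness of $\delta_*$ and $\delta_1$, provided $N$ is chosen sufficiently large. The bounds in the proposition are then read off from the definition of $\|\cdot\|_X$ and the inclusion $\|\tilde\eta\|_{H^2_{dR}}\lesssim\|x\|_S+|x_d|$ recalled above.

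The principal obstacle I anticipate is the sharp linear-propagator estimate for the continuous spectrum in the presence of the non-autonomous transport operator $-2\beta(\tau)\xi\partial_\xi$, which does not commute with multiplication by $\xi$ (nor with the stable/unstable splitting used on the discrete side) and therefore shuffles frequencies as $\tau$ varies. Transferring the polynomial $\tau^{-N}$ decay of the source into the required $\tau^{-(N-1)}$ decay of the solution while simultaneously enforcing vanishing terminal data on the finite window $[\tau_0,\tau_1]$ forces a careful adaptation of the KST parametrix construction, tracking the precise $\tau$-weights along the transport characteristics $\xi\mapsto\xi\,\exp\!\bigl(\int 2\beta\bigr)$; once that propagator bound is established the remainder of the argument is routine perturbation theory.
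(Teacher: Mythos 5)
Your proposal matches the paper's proof in all essential respects: the same passage to the $(\tau,R)$ self-similar variables and conjugated variable $\tilde\eta=R\eta$, the same observation that the leading $(\partial_\tau^2+\mathcal L)\,(R\eta_{hyp})$ terms cancel since $\mathcal L\phi_d=-k_d^2\phi_d$ (leaving only $\beta_\nu(\tau)\sim\tau^{-1}$-weighted transport commutators, which together with $a=\pm\delta_1^{\nu N}$ give the $\delta_1^\nu\tau^{-N}$-size source), the same distorted Fourier decomposition into $(x_d,x)$ with vanishing terminal data at $\tau_1$, the same KST Duhamel parametrix for the continuous part and variation of constants against $e^{\pm k_d\tau}$ for the discrete part, and the same weighted norms $\tau^{N}|x_d|$, $\tau^{N-1}\|x\|_S$ closed by iteration/contraction for $N$ large. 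You also correctly isolate the technical crux — the non-autonomous transport $-2\beta(\tau)\xi\partial_\xi$ and the transfer of polynomial $\tau$-weights through the parametrix — which the paper resolves via the explicit sine-kernel formula inherited from \cite{KST,KS1}.
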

\begin{proof} 
We first consider to what extent the expression $\tilde{u}_{II}(t, x)$ is an approximate solution of \eqref{eq:foccrit}. Observe that 
\begin{align*}
(-\partial_t^2 +\triangle)\tilde{u}_{II} + \tilde{u}_{II}^5 = (-\partial_t^2 +\triangle)\eta_{hyp} - 5W_{\lambda(t)}^4\eta_{hyp} + A =: B+A,
\end{align*}
where we put 
\[
A: = \sum_{\substack{j+k+l=5,\\j<4,\,l\neq 0}}C_{j,k}W_{\lambda(t)}^j\eps^k \eta_{hyp}^l
\]
We commence by bounding the various constituents: 
\\

{\it{The terms $B$:}} This can be written as a linear combination of terms of the form 
\begin{equation}\label{eq:B1}\begin{split}
&\big(t\lambda(t)\big)'' e^{-k_0(\tau_1 - \tau)} ag_0(R),\, \big(t\lambda(t)\big)' \frac{\lambda'(t)}{\lambda(t)}e^{-k_0(\tau_1 - \tau)}a(R\partial_R) g_0(R),\\&\big(\frac{\lambda'(t)}{\lambda(t)}\big)^2e^{-k_0(\tau_1 - \tau)}a(R\partial_R)^2 g_0
\end{split}\end{equation}
Labeling these terms $B_i$, $i = 1,2,3$, we then infer the following key bounds
\[
\|R\lambda^{-2}(\tau)B_i\|_{H^1_{dR}}\lesssim \delta_1^{\nu N}\tau^{-1} e^{-k_0(\tau_1 - \tau)}\lesssim \delta_1^{\nu}\tau^{-N} e^{-k_0(\tau_1 - \tau)},\,i=1,2,3
\]
{\it{The terms $A$:}} These are bounded by using 
\[
\eta_{hyp} = O(\delta_1^{\nu N}e^{-k_0(\tau_1 - \tau)}) = O(\delta_1^{\nu}\tau^{-(N-1)}e^{-k_0(\tau_1 - \tau)}).
\]
Thus we get 
\[
\|R\lambda^{-2}(\tau)W_{\lambda(t)}^3\eta_{hyp}^2\|_{H^1_{dR}}\lesssim \lambda^{-\frac{1}{2}}(\tau)\delta_1^{2\nu}e^{-2k_0(\tau_1 - \tau)}\tau^{-2(N-1)}
\]
and further 
\[
\|R\lambda^{-2}(\tau)\eta_{hyp}^5\|_{H^1_{dR}}\lesssim \delta_1^{5\nu}\lambda^{-2}(\tau)e^{-5k_0(\tau_1 - \tau)}\tau^{-5(N-1)}
\]
Also, we get 
\begin{align*}
&\|R\lambda^{-2}(\tau)\eps^{4}\eta_{hyp}\|_{H^1_{dR}}\lesssim \delta_1^{\nu}\tau^{-N-7}e^{-k_0(\tau_1 - \tau)},\\&\|R\lambda^{-2}(\tau)W_{\lambda(t)}^3\eps\eta_{hyp}\|_{H^1_{dR}}\lesssim \delta_1^{\nu}\tau^{-N-1}e^{-k_0(\tau_1 - \tau)}
\end{align*}
Observe that we have $\eps(\tau, R) = O(\lambda^{\frac{1}{2}}\frac{R}{(\lambda |t|)^2})$ on $\tau\lesssim R$ but on the region $|\tau|\gtrsim R$ we get $\|\eta_{hyp}\|_{H^1(R\gtrsim \tau)}\lesssim e^{-c\tau}$. 
Let us denote $A+B: = e_0$. Then we obtain the following equation for $\eta$: 
\begin{equation}\label{eq:etawave}
\mathcal{D}^2\tilde{\eta} + \beta_{\nu}(\tau)\mathcal{D}\tilde{\eta} + \calL\tilde{\eta} = \kappa^{-2}(\tau)\big[5(\tilde{u}_{II}^4 - u_0^4)\tilde{\eta} + RN(\tilde{u}_{II}, \tilde{\eta}) + R e_{0}\big]
\end{equation}
where we have introduced the notation 
\[
\calL = -\partial_R^2 - 5W^4(R),\,\mathcal{D} = \partial_{\tau} + \beta_{\nu}(\tau)(R\partial_R - 1),
\]
\[
N(\tilde{u}_{II}, \tilde{\eta}) = (\tilde{u}_{II} + \frac{\tilde{\eta}}{R})^5 - R\tilde{u}_{II}^5 - 5\tilde{u}_{II}^4\frac{\tilde{\eta}}{R}
\]

Our task is to solve \eqref{eq:etawave} on $[\tau_0, \tau_1]$ subject to the boundary conditions
\[
\tilde{\eta}(\tau_1, \cdot) = \partial_{\tau}\tilde{\eta}(\tau_1, \cdot) = 0
\]
Fortunately, the operator on the left of \eqref{eq:etawave} admits a convenient parametrix, so we can solve this equation by recourse to the Fourier representation. 
Passing to the Fourier side, we obtain the equation 
\begin{equation}\label{eq:transport}
\big(\mathcal{D}_{\tau}^2 + \beta_{\nu}(\tau)\mathcal{D}_{\tau} + \underline{\xi}\big)\underline{x}(\tau, \xi) = R(\tau, \underline{x}) + f(\tau, \underline{\xi}),\,\underline{\xi} =  \left(\begin{array}{c}\xi_d\\ \xi\end{array}\right)
\end{equation}
where we write 
\[
\tilde{\eta}(\tau, R) = x_d(\tau)\phi_d(R) + \int_0^\infty x(\tau, \xi)\phi(R, \xi)\rho(\xi)\,d\xi,\, \underline{x} = \left(\begin{array}{c}x_d(\tau)\\ x(\tau, \xi)\end{array}\right)
\]
and we have 
\[
 \mathcal{D}_{\tau} = \partial_{\tau} + \beta_{\nu}(\tau)\mathcal{A},\,\mathcal{A} = \left(\begin{array}{cc}0&0\\0&\mathcal{A}_c\end{array}\right)
 \]
as well as 
\begin{equation}\label{eq:Rterms}
R(\tau, \underline{x})(\xi) = \big(-4\beta_{\nu}(\tau)\mathcal{K}\mathcal{D}_{\tau}\underline{x} - \beta_{\nu}^2(\tau)(\mathcal{K}^2 + [\mathcal{A}, \mathcal{K}] + \mathcal{K} + \frac{\beta_{\nu}'}{\beta_{\nu}^2}\mathcal{K})\underline{x}\big)(\xi)
\end{equation}
\[
\mathcal{A}_c = -2\xi\partial_{\xi} - (\frac{5}{2}+\frac{\rho'(\xi)\xi}{\rho(\xi)}),\,\mathcal{K} = \left(\begin{array}{cc}\mathcal{K}_{dd}&\mathcal{K}_{dc}\\\mathcal{K}_{cd}&\mathcal{K}_{cc}\end{array}\right),\,\beta_{\nu}(\tau) = \frac{\dot{\lambda}(\tau)}{\lambda(\tau)},
\]
where the symbols $\mathcal{K}_{dd}$ etc  are operators defined in \cite{KST}. Finally, $f(\tau, \underline{\xi}) $ represents the Fourier transform of the source terms. 
 \begin{equation}\label{eq:fterms}
 f(\tau, \underline{\xi}) = \mathcal{F}\big( \lambda^{-2}(\tau)\big[5(\tilde{u}_{II}^4 - u_0^4)\tilde{\eta} + RN(\tilde{u}_{II}, \tilde{\eta}) + R e_{0}\big]\big)\big(\underline{\xi}\big)
 \end{equation}
 The rapid decay of the variable $\tilde{\eta}(\tau, R)$ allows us to solve \eqref{eq:transport} via a direct iteration scheme, essentially as in \cite{KST}. Specifically, we use 
 \begin{equation}\label{eq:transportiter}
\big(\mathcal{D}_{\tau}^2 + \beta_{\nu}(\tau)\mathcal{D}_{\tau} + \underline{\xi}\big)\underline{x}_j(\tau, \xi) = R(\tau, \underline{x}_{j-1}) + f_{j-1}(\tau, \underline{\xi}),\,\underline{\xi} =  \left(\begin{array}{c}\xi_d\\ \xi\end{array}\right),\,j\geq 1,
\end{equation}
with 
 \begin{equation}
 f_{j-1}(\tau, \underline{\xi}) = \mathcal{F}\big(\lambda^{-2}(\tau)\big[5(\tilde{u}_{II}^4 - u_0^4)\tilde{\eta} + RN(\tilde{u}_{II}, \tilde{\eta}) + R e_{0}\big]\big)\big(\underline{\xi}\big),\,j\geq 2
 \end{equation}
as well as $f_0: = 0$. \\
To proceed, we observe that the linear inhomogeneous problem 
\[
\big(\mathcal{D}_{\tau}^2 + \beta_{\nu}(\tau)\mathcal{D}_{\tau} + \underline{\xi}\big)\underline{x}(\tau, \xi) = f(\tau, \xi)
\]
can be solved completely explicitly (imposing vanishing data at infinity).
 In fact, the way we have set things up, we can use (See \cite{KS1})
 \begin{equation}\label{eq:para}\begin{split}
x(\tau, \xi) &= \xi^{-\frac{1}{2}}\int_{\tau}^{\tau_1} \frac{\lambda^{\frac{3}{2}}(\tau)}{\lambda^{\frac{3}{2}}(\sigma)}\frac{\rho^{\frac{1}{2}}(\frac{\lambda^{2}(\tau)}{\lambda^{2}(\sigma)}\xi)}{\rho^{\frac{1}{2}}(\xi)}\sin\big[\lambda(\tau)\xi^{\frac{1}{2}}\int_{\tau}^{\sigma}\lambda^{-1}(u)\,du\big]f(\sigma, \frac{\lambda^{2}(\tau)}{\lambda^{2}(\sigma)}\xi)\,d\sigma\\
&=: \int_{\tau}^{\tau_1} U(\tau, \sigma, \xi)]f(\sigma, \frac{\lambda^{2}(\tau)}{\lambda^{2}(\sigma)}\xi)\,d\sigma
\end{split}\end{equation}
for the continuous spectral part, while we have the implicit equation
\begin{equation}\label{eq:x_d}
x_{d}(\tau) = \int_{\tau}^{\tau_1} H_d(\tau, \sigma)\big(f_d(\sigma) - \beta_{\nu}(\sigma)\partial_{\sigma}x_d(\sigma)\big)\,d\sigma,\,H_d(\tau, \sigma) = -\frac{1}{2}|\xi_d|^{-\frac{1}{2}}e^{-|\xi_d|^{\frac{1}{2}}|\tau-\sigma|}
\end{equation}
Then we have 

\begin{lem}\label{lem:converge} For $N$ large enough and $t_0$ small enough, we have the a priori bounds 
\begin{align*}
&\|x_j(\tau, \cdot)\|_{S}\lesssim \delta_1^{\nu}\tau^{-(N-1)},\,\|\mathcal{D}_{\tau}x_j(\tau, \cdot)\|_{N}\lesssim \delta_1^{\nu}\tau^{-N},\, |x_{j,d}(\tau)|\lesssim \delta_1^{\nu}\tau^{-N},\\&|\partial_{\tau}(x_{j,d})(\tau)|\lesssim \delta_1^{\nu}\tau^{-N}
\end{align*}
Moreover, for the differences, we have 
\begin{align*}
&\|(x_j - x_{j-1})(\tau, \cdot)\|_{S}\lesssim (\frac{1}{N})^{j-1}\delta_1^{\nu}\tau^{-(N-1)},\,|(x_{j,d} - x_{j-1,d})(\tau)|\lesssim (\frac{1}{N})^{j-1}\delta_1^{\nu}\tau^{-N},\\
&\|\mathcal{D}_{\tau}(x_j - x_{j-1})(\tau, \cdot)\|_{S}\lesssim (\frac{1}{N})^{j-1}\delta_1^{\nu}\tau^{-N}
\end{align*}

\end{lem}
\begin{proof} We commence by observing as in \cite{KST} that 
\[
\sup_{\tau>\tau_0>0}\tau^{N-1}\big\|\int_{\tau}^{\tau_1} U(\tau, \sigma, \xi)]f(\sigma, \frac{\lambda^{2}(\tau)}{\lambda^{2}(\sigma)}\xi)\,d\sigma\big\|_{S}\lesssim \frac{1}{N}\sup_{\tau>\tau_0}\tau^{N+1}\|f(\tau, \cdot)\|_{N}
\]
Here we lose two powers of decay in $\tau$ due to the singular kernel and the fact that we integrate over $\tau$. 
For the first iterate, we can exploit the exponential decay to lose only one power of $\tau$.  \\
For the time derivative, we get 
\[
\sup_{\tau>\tau_0>0}\tau^{N}\big\|\mathcal{D}_{\tau}\int_{\tau}^{\tau_1} U(\tau, \sigma, \xi)]f(\sigma, \frac{\lambda^{2}(\tau)}{\lambda^{2}(\sigma)}\xi)\,d\sigma\big\|_{S}\lesssim \frac{1}{N}\sup_{\tau>\tau_0}\tau^{N+1}\|f(\tau, \cdot)\|_{N}
\]
where we lose one power of $\tau$ due to the integration. Thus an additional exponential weight in $f$ improves decay by one power. \\
We also use the bound
\[
\big\|R(\tau, \underline{x})(\xi)\big\|_{N}\lesssim \tau^{-2}\big(\|x(\tau, \cdot)\|_{S} + |x_d(\tau)|\big) + \tau^{-1}\big(\|\mathcal{D}_{\tau}x(\tau, \cdot)\|_{S} + |\partial_{\tau}x_d(\tau)|\big)
\]
 which follows easily from the estimates in \cite{KST}; here the implicit constant is independent of $N$.  To conclude the proof of the lemma, we now need to bound the contributions from the source terms in $f_{j-1}$. For simplicity, we suppress the subscript in the sequel. 
\\

{\it{The contribution from $5\lambda^{-2}(\tau)(\tilde{u}_{II}^4 - u_0^4)\tilde{\eta}$.}} Recalling the definition of $\tilde{u}_{II}$, we have to bound the following list of terms (omitting intermediate terms): 
\begin{equation}\label{eq:list1}
\lambda^{-2}(\tau)\mathcal{F}\big(W_{\lambda(t)}^3\eta_{hyp}\tilde{\eta}\big),\,\lambda^{-2}(\tau)\mathcal{F}\big(W_{\lambda(t)}^3\eps\tilde{\eta}\big),\,\lambda^{-2}(\tau)\mathcal{F}\big(\eta_{hyp}^4\tilde{\eta}\big),\,\lambda^{-2}(\tau)\mathcal{F}\big(\eps^4\tilde{\eta}\big)
\end{equation}
For the first term, we get 
\begin{align*}
\big\|\lambda^{-2}(\tau)\mathcal{F}\big(W_{\lambda(t)}^3\eta_{hyp}\tilde{\eta}\big)\big\|_{N}\lesssim \lambda^{-2}(\tau)\big\|W_{\lambda(t)}^3\eta_{hyp}\tilde{\eta}\big\|_{H^1_{dR}}&\lesssim \lambda^{-\frac{1}{2}}(\tau)\|\eta_{hyp}\|_{H^2_{R^2 dR}}\|\tilde{\eta}\|_{H^1_{dR}}\\
&\lesssim \delta_1^{2\nu}\lambda^{-\frac{1}{2}}(\tau)\tau^{-2(N-1)}
\end{align*}
which is more than enough since $2(N-1)\geq N+1$ for $N\geq 3$. \\
For the second term in \eqref{eq:list1} we use 
\[
\lambda^{-\frac{1}{2}}\eps(\tau, R) = \chi_{R\leq \nu\tau}\lambda^{-\frac{1}{2}}\eps(\tau, R) + \chi_{R> \nu\tau}\lambda^{-\frac{1}{2}}\eps(\tau, R)
\]
Then, use that 
\[
\|\chi_{R\leq \nu\tau}\lambda^{-\frac{1}{2}}\langle R\rangle^{-\frac{3}{2}-}\eps(\tau, R)\|_{H^1_{dR}}\lesssim \tau^{-2},
\]
which gives 
\begin{align*}
&\big\|\lambda^{-2}(\tau)W_{\lambda(t)}^3\chi_{R\leq \nu\tau}\eps\tilde{\eta}\big\|_{H^1_{dR}}\\&\lesssim \|\lambda^{-\frac{3}{2}}\langle R\rangle^{\frac{3}{2}+}W_{\lambda(t)}^3\|_{\dot{H}^1_{dR}\cap L^\infty_{dR}}\|\chi_{R\leq \nu\tau}\lambda^{-\frac{1}{2}}\langle R\rangle^{-\frac{3}{2}-}\eps(\tau, R)\|_{H^1_{dR}}\|\tilde{\eta}\|_{H^1_{dR}}\\
&\lesssim \delta_1^{\nu}\tau^{-N-1}
\end{align*}
Next, consider the contribution of $\chi_{R> \nu\tau}\lambda^{-\frac{1}{2}}\eps(\tau, R)$. Here we simply use the explicit decay of $W(R)$ to get 
\begin{align*}
\big\|\lambda^{-2}(\tau)W_{\lambda(t)}^3\chi_{R>\nu\tau}\eps\tilde{\eta}\big\|_{H^1_{dR}}&\lesssim \tau^{-3}\|\lambda^{-\frac{1}{2}}\eps(\tau, \cdot)\|_{\dot{H}^{1}_{R^2 dR}\cap L^6_{R^2 dR}}\|\tilde{\eta}\|_{H^1_{dR}}\\
&\lesssim \tau^{-N-2}\delta_1^{\nu},
\end{align*}
which is again better than what we need. \\
The third term in \eqref{eq:list1} is better than the first (due to the exponential decay of $\eta_{hyp}$) and hence omitted. The last term in \eqref{eq:list1} is a bit more delicate: in the interior of the light cone, the explicit expansion of $\eps(\tau, R)$ implies that 
\[
\big\|\chi_{R\leq \nu\tau}\lambda^{-\frac{1}{2}}\eps(\tau, R)\big\|_{H^1_{dR}}\lesssim \tau^{-\frac{1}{2}}, 
\]
and so we find 
\begin{align*}
\big\|\lambda^{-2}(\tau)\eps^4\tilde{\eta}\big\|_{H^1_{dR}}\lesssim \|\lambda^{-\frac{1}{2}}\eps(\tau, R)\|_{H^1_{dR}}^4\|\tilde{\eta}\|_{H^1_{dR}}\lesssim \delta_1^{\nu}\tau^{-N-1}
\end{align*}
On the outside of the light cone, we need to estimate 
\[
\big\|\chi_{R>\nu\tau}\lambda^{-2}(\tau)(\nabla_{R}\eps) \eps^3\tilde{\eta}\big\|_{L^2_{dR}} + \big\|\chi_{R>\nu\tau}\lambda^{-2}(\tau)\eps^4\nabla_R\tilde{\eta}\big\|_{L^2_{dR}} +  \big\|\chi_{R>\nu\tau}\lambda^{-2}(\tau) \eps^4\tilde{\eta}\big\|_{L^2_{dR}}
\]
Here we use the estimate 
\[
\big\|\chi_{R>\nu\tau}\lambda^{-\frac{1}{2}}(\tau)\eps(\tau, R)\big\|_{L^\infty_{dR}}\lesssim \tau^{-\frac{1}{2}}\|\lambda^{-\frac{1}{2}}(\tau)\eps(\tau, R)\|_{\dot{H}^{1}_{R^2 dR}\cap L^6_{R^2 dR}}
\]
as well as 
\[
\|\chi_{R>\nu\tau}\frac{1}{R}\tilde{\eta}(\tau, R)\|_{L^\infty}\lesssim \tau^{-\frac{1}{2}}\|\tilde{\eta}(\tau, R)\|_{H^1_{dR}}
\]
Thus we obtain 
\begin{align*}
&\big\|\chi_{R>\nu\tau}\lambda^{-2}(\tau)(\nabla_{R}\eps) \eps^3\tilde{\eta}\big\|_{L^2_{dR}}\\&\lesssim \|\lambda^{-\frac{1}{2}}(\tau)\nabla_{R}\eps\|_{R^2 dR}\big\|\chi_{R>\nu\tau}\lambda^{-\frac{1}{2}}(\tau)\eps(\tau, R)\big\|_{L^\infty_{dR}}^3\|\chi_{R>\nu\tau}\frac{1}{R}\tilde{\eta}(\tau, R)\|_{L^\infty}\\
&\lesssim \delta_1^{\nu}\tau^{-N-1}
\end{align*}
and further 
\begin{align*}
&\big\|\chi_{R>\nu\tau}\lambda^{-2}(\tau)\eps^4\nabla_R\tilde{\eta}\big\|_{L^2_{dR}} +  \big\|\chi_{R>\nu\tau}\lambda^{-2}(\tau) \eps^4\tilde{\eta}\big\|_{L^2_{dR}}\\&\lesssim \big\|\chi_{R>\nu\tau}\lambda^{-\frac{1}{2}}(\tau)\eps(\tau, R)\big\|_{L^\infty_{dR}}^4\|\tilde{\eta}(\tau, R)\|_{H^1_{dR}}\\
&\lesssim \delta_1^{\nu}\tau^{-N-1}
\end{align*}

{\it{The contribution of the terms $RN(\tilde{u}_{II}, \tilde{\eta})$.}} Since we have 
\[
N(\tilde{u}_{II}, \tilde{\eta}) = (\tilde{u}_{II} + \frac{\tilde{\eta}}{R})^5 - R\tilde{u}_{II}^5 - 5\tilde{u}_{II}^4\frac{\tilde{\eta}}{R}
\]
we have to estimate terms of the form 
\[
R\lambda^{-2}(\tau)\tilde{u}_{II}^{5-j}(\frac{\tilde{\eta}}{R})^j,\,j\geq 2,
\]
Since we have from Sobolev's embedding
\[
\big\|\frac{\tilde{\eta}}{R}\big\|_{L^\infty}\lesssim \big\|\frac{\tilde{\eta}}{R}\big\|_{H^2_{R^2 dR}}\sim \|\tilde{\eta}\|_{H^2_{dR}},
\]
and also $\lambda^{-\frac{1}{2}}\tilde{u}_{II}\in \dot{H}^1_{dR}\cap L^\infty$, we find 
\begin{align*}
\big\|R\lambda^{-2}(\tau)\tilde{u}_{II}^{5-j}(\frac{\tilde{\eta}}{R})^j\big\|_{H^1_{dR}}\lesssim \delta_1^{2\nu}\tau^{-2(N-1)},\,j\geq 2
\end{align*}
which is again more than enough. This concludes the proof of the lemma, up to the statement about the differences and the better decay for the discrete spectral part. The gains of $N^{-1}$ follow from integrating the weights $\tau^{-(N-1)}$ (and better). The better decay for the discrete spectral part is a consequence of the exponential decay of the kernel $H_d(\tau, \sigma)$ in \eqref{eq:x_d}. 
\end{proof}

The proof of the proposition follows by a simple iteration argument using the lemma. 

\end{proof}

\section{Ejection past time $t = -t_1$.} We next need to show that the solution constructed above with 
\[
u(t, x) = W_{\lambda(t)}(x) + \eps(t, x) + \eta_{hyp}(t, x) + \eta(t, x),\,t\in [-t_0, -t_1]
\]
leads to a controlled exit past time $t = -t_1$ from a suitable neighborhood of $\mathcal{S}: = \{W_{\lambda}\}_{\lambda>0}$. Specifically, we shall re-scale by $\lambda(t_1)$ and shift the new time origin to time $-t_1$, which changes the solution to 
\begin{equation}\label{eq:tildeu}
\tilde{u}(t, x) = W_{\frac{\lambda(-t_1+t\lambda^{-1}(-t_1))}{\lambda(-t_1)}}(x) + \eps_{\lambda^{-1}(t_1)}(-t_1 + t\lambda^{-1}(-t_1), x) + \tilde{\eta},
\end{equation}
and now we need to track the evolution of $\tilde{\eta}$ in {\it{forward time}} but on a scale of size at most $|\log \delta_1|$. In fact, the equation for $\tilde{\eta}$ becomes 
\[
(-\partial_t^2 -\mathcal{H})\tilde{\eta} = 5(W^4 - W_{\frac{\lambda(-t_1+t\lambda^{-1}(-t_1))}{\lambda(-t_1)}}^4)\tilde{\eta} + N(\tilde{\eta}),\,\mathcal{H} = -\triangle -5W^4
\]
with data at the new time origin corresponding to $t = t_1$ given by 
\[
\big(\tilde{\eta}, \partial_t\tilde{\eta}\big)|_{t=0} = \big(\lambda^{-\frac{1}{2}}(t_1)\eta_{hyp}(\tau_1, \cdot) ,\,\lambda^{-\frac{1}{2}}(t_1)\partial_{\tau}\eta_{hyp}(\tau_1, \cdot)\big)
\]
where we write 
\begin{align*}
-N(\tilde{\eta})& = \big(W_{\frac{\lambda(-t_1+t\lambda^{-1}(-t_1))}{\lambda(-t_1)}}(x) + \eps_{\lambda^{-1}(t_1)}(t, x) + \tilde{\eta}\big)^5\\& - \big(W_{\frac{\lambda(-t_1+t\lambda^{-1}(-t_1))}{\lambda(-t_1)}}(x) + \eps_{\lambda^{-1}(t_1)}(t, x)\big)^5 - 
5W_{\frac{\lambda(-t_1+t\lambda^{-1}(-t_1))}{\lambda(-t_1)}}^4\tilde{\eta}
\end{align*}
Re-labelling 
\[
u_{II}: = W_{\frac{\lambda(-t_1+t\lambda^{-1}(-t_1))}{\lambda(-t_1)}}(x) + \eps_{\lambda^{-1}(t_1)}(t, x), 
\]
we then obtain the equation 
\begin{equation}\label{eq:etaforward}
(-\partial_t^2 -\mathcal{H})\tilde{\eta} = 5(W^4 - W_{\frac{\lambda(-t_1+t\lambda^{-1}(-t_1))}{\lambda(-t_1)}}^4)\tilde{\eta} + A + B
\end{equation}
where we put 
\[
A: = 5\big[(W_{\frac{\lambda(-t_1+t\lambda^{-1}(-t_1))}{\lambda(-t_1)}}(x) + \eps_{\lambda^{-1}(t_1)}(t, x))^4 - W_{\frac{\lambda(-t_1+t\lambda^{-1}(-t_1))}{\lambda(-t_1)}}^4(x)\big]\tilde{\eta}
\]
\[
B: = \sum_{j=2}^5 C_j u_{II}^{5-j}\tilde{\eta}^j
\]
Note that while in the preceding section the coordinate change $t\rightarrow\tau \sim \lambda(t) t,\,|x| = r\rightarrow R = \lambda(t)r$ was {\it{time dependent}}, here we have a {\it{time-independent}} coordinate change 
\[
t\rightarrow \frac{\lambda(-t_1+t\lambda^{-1}(-t_1))}{\lambda(-t_1)},\,r\rightarrow |x| =\lambda(-t_1)r
\]

We then split 
\[
\tilde{\eta} = \delta(t)g_0(|x|) + \tilde{\eta}_c,\,\tilde{\eta}_c = P_{g_0^{\perp}}\tilde{\eta}
\]
Observe that 
\[
 \delta(0) = \lambda^{-\frac{1}{2}}(-t_1)a,\,\tilde{\eta}_c(0, \cdot) = 0
\]
Then we have the following 
\begin{lem}\label{lem:eject} There is some $\delta_0>0$ sufficiently small {\it{but independent of $\delta_1$}} with the following property: denoting 
\[
b: = a\lambda(-t_1)^{-\frac{1}{2}},
\]
if $b e^{k_d T}\leq \delta_0$, $0<T$,
\[
\delta(t)\sim b e^{k_d t},\,\|\tilde{\eta}_{c}(t, \cdot)\|_{H^2_x}\ll |b|e^{k_d t}
\]
 \end{lem}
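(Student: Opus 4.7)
The plan is to adapt the ejection-type argument of \cite{CNLW1} to the setting where the background $u_{II}$ is genuinely time-dependent. I decompose $\tilde\eta = \delta(t)g_0 + \tilde\eta_c$ and project \eqref{eq:etaforward} onto $g_0$ and its orthogonal complement to obtain the coupled system $\ddot\delta - k_d^2\delta = F_d(t)$ and $(\partial_t^2 + \mathcal{H}_\perp)\tilde\eta_c = F_c(t,\cdot)$, where $F := 5(W^4 - W_{\lambda_*}^4)\tilde\eta + A + B$ with $\lambda_*(t) := \lambda(-t_1+t\lambda^{-1}(-t_1))/\lambda(-t_1)$, and $\mathcal{H}_\perp := P_{g_0^\perp}\mathcal{H}P_{g_0^\perp}\geq 0$ (since $-k_d^2$ is the only negative eigenvalue of $\mathcal{H}$ in the radial sector). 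The data are $\delta(0) = b$, $\dot\delta(0) = k_d b + O(\beta_\nu(\tau_1)|b|)$ coming from the factor $e^{-k_d(\tau_1-\tau)}$ in $\eta_{hyp}$, $\tilde\eta_c(0,\cdot)=0$, and $\partial_t\tilde\eta_c(0,\cdot)$ of size $O(\beta_\nu(\tau_1)|b|)$, which is a negligible correction.

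Next I set up a bootstrap with ansatz $\delta(t) = be^{k_d t}(1 + o_{\delta_0}(1))$ and $\|\tilde\eta_c(t)\|_{H^2_x} \leq \eta\,|b|e^{k_d t}$ on the window $0\leq t\leq T$, $|b|e^{k_d T}\leq\delta_0$, for a small parameter $\eta>0$. For the unstable mode, variation of parameters gives
\[
\delta(t) = be^{k_d t} + \frac{1}{2k_d}\int_0^t \sinh(k_d(t-s))F_d(s)\,ds + \text{(data corrections)},
\]
so closure requires $|F_d(s)|\lesssim (|b|e^{k_d s})^2 + o(1)|b|e^{k_d s}$. For the orthogonal part I use the distorted Fourier calculus of \cite{KST} restricted to $g_0^\perp$: the propagators $\cos(t\sqrt{\mathcal{H}_\perp})$ and $\sin(t\sqrt{\mathcal{H}_\perp})/\sqrt{\mathcal{H}_\perp}$ are bounded on $H^2_x\cap g_0^\perp$ uniformly over $t\lesssim |\log\delta_1|$ (any threshold-resonance polynomial loss from the $\Lambda W$ mode is negligible against $e^{k_d t}$), yielding the Duhamel bound $\|\tilde\eta_c(t)\|_{H^2_x} \lesssim \int_0^t\|F_c(s)\|_{H^1_x}\,ds$ plus data.

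The three source contributions are then estimated as follows. The nonlinear piece $B=\sum_{j\geq 2}C_j u_{II}^{5-j}\tilde\eta^j$ is at least quadratic, so Sobolev's embedding $H^2_x\hookrightarrow L^\infty_x$ and the boundedness of $\lambda^{-1/2}u_{II}$ in $\dot H^1\cap L^\infty$ give $\|B\|_{H^1_x}\lesssim (|b|e^{k_d t})^2$, which is $o(|b|e^{k_d t})$ once $\delta_0$ is small. The $A$-term $5[(W_{\lambda_*}+\eps)^4 - W_{\lambda_*}^4]\tilde\eta$ has an $H^1\to H^1$ multiplier-norm factor that is small: inside the rescaled light cone the expansion of $\eps$ yields extra $\tau$-decay, while $\|\eps\|_{\dot H^1}\leq\delta_*$ globally. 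The potential shift $5(W^4 - W_{\lambda_*}^4)$ is $o(1)$ as a multiplier because $\lambda_*(t) = (1 - t\,t_1^\nu)^{-1-\nu}$ is $1 + o(1)$ uniformly on $0\leq t\leq T\sim|\log\delta_1|$, since $T\cdot t_1^\nu\to 0$ as $t_1=\delta_1\to 0$.

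The main obstacle is the contribution of $\eps$ outside the light cone, where only the energy bound $\|\nabla_{t,x}\eps\|_{L^2(|x|>|t|)}\leq\delta_*$ is available and no pointwise smallness is a priori known. Here I would exploit the spatial decoupling: the eigenmode $g_0$ and, inductively via the bootstrap, the component $\tilde\eta_c$ are concentrated on bounded rescaled length scales, whereas the exterior part of $\eps$ lives at $R\gtrsim\tau$, so pairings such as $W_{\lambda_*}^3\eps\tilde\eta$ pick up polynomial decay factors from $W_{\lambda_*}$. Combined with the shortness of the ejection window and finite propagation speed, which confines the dangerous interactions to a bounded spatial region, this yields $\|F_c\|_{H^1_x}\ll |b|e^{k_d t}$ and closes the bootstrap, producing the claimed asymptotics $\delta(t)\sim be^{k_d t}$ and $\|\tilde\eta_c(t)\|_{H^2_x}\ll |b|e^{k_d t}$ for $0<t\leq T$.
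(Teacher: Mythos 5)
Your overall scheme — projecting onto $g_0$ and its complement, running a bootstrap with ansatz $|\delta(t)|\sim|b|e^{k_d t}$, $\|\tilde\eta_c(t)\|_{H^2_x}\ll|b|e^{k_d t}$, using Duhamel for $\tilde\eta_c$ and variation of parameters for $\delta$, and noting that the linear-in-$t$ growth from the zero resonance of $\mathcal{H}$ is harmless against $e^{k_d t}$ on the $O(|\log\delta_1|)$ window — is exactly the structure of the paper's argument. Your treatment of $B$ and of the potential shift $5(W^4-W_{\lambda_*}^4)\tilde\eta$ via $\lambda_*(t)=(1-t\,t_1^\nu)^{-1-\nu}=1+O(t\,\delta_1^\nu)$ also matches.

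However, your handling of the $A$-terms $W_{\lambda_*}^{4-j}\eps^j\tilde\eta$ in the exterior region $R\gtrsim t_1\lambda(-t_1)$ has a genuine gap. You attribute the smallness there to ``polynomial decay factors from $W_{\lambda_*}$,'' but the $j=4$ term $\eps^4\tilde\eta$ carries no $W$-factor at all, and for that term the mere energy bound $\|\nabla_{t,x}\eps\|_{L^2(|x|>|t|)}\le\delta_*$ combined with Sobolev gives only an $O(\delta_*^4)$ coefficient, which is \emph{not} small relative to the absolute constant $K^{-1}$ (or your $\eta$) needed to close the bootstrap, since $\delta_*$ is a fixed constant independent of $\delta_1$. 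Your fallback to finite propagation speed also does not help: the data for $\tilde\eta_c$ vanish but $F_c$ is not compactly supported, so $\tilde\eta_c$ spreads and the ``dangerous interactions'' are not confined. What actually makes the exterior term small — and what the paper uses — is the radial Sobolev pointwise bound
\[
\big\|\chi_{R\gtrsim t_1\lambda(-t_1)}\,\eps_{\lambda^{-1}(t_1)}\big\|_{L^\infty}\lesssim \big(t_1\lambda(-t_1)\big)^{-\tfrac12}=\delta_1^{\nu/2},
\]
which tends to $0$ as $\delta_1\to0$ and thus supplies a quantitative smallness factor independent of $\delta_*$. Inserting this for each power of $\eps$ (in particular four times for $j=4$) yields $\|\chi_{R\gtrsim t_1\lambda(-t_1)}F_{2,j}\|_{H^1_x}\lesssim[t_1\lambda(t_1)]^{-2}|b|e^{k_d s}$ and closes the bootstrap. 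You should replace the ``decay from $W_{\lambda_*}$'' and ``finite speed'' arguments by this exterior $L^\infty$ estimate on $\eps$.
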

  \begin{proof} We use a simple bootstrap argument, exploiting the fact that this is a perturbative statement. Thus we make a bootstrap assumption of the form 
 \begin{equation}\label{eq:bootstr}
 |\delta(t)|\leq 2|b| e^{k_d t},\,\|\tilde{\eta}_{c}(t, \cdot)\|_{H^2_x}\leq \frac{2|b|}{K}e^{k_d t}
 \end{equation}
 for some suitable large $K$(absolute constant, which is large but small enough compared to $\delta_0^{-1}$), and then improve these bounds by a factor $2$. We start with the bounds for $\tilde{\eta}_c$, for which we have the equation 
 \begin{equation}\label{eq:etaforwardeta_c}
(-\partial_t^2 -\mathcal{H})\tilde{\eta}_c = P_{g_0^{\perp}}\big(5(W^4 - W_{\frac{\lambda(-t_1+t\lambda^{-1}(-t_1))}{\lambda(-t_1)}}^4)\tilde{\eta} + A + B\big) =: F
\end{equation}
From Duhamel's principle, we infer 
\begin{align*}
\tilde{\eta}_c(t, \cdot) =
-\int_0^t\frac{\sin([t-s]\sqrt{\mathcal{H}})}{\sqrt{\mathcal{H}}}P_{g_0^{\perp}}F(s, \cdot)\,ds 
\end{align*}
We estimate each of the constituents of $F(s, \cdot)$,  making use of the bootstrap assumptions: 
\\

{\it{(i) The contribution of $F_1: = P_{g_0^{\perp}}\big(5(W^4 - W_{\frac{\lambda(-t_1+t\lambda^{-1}(-t_1))}{\lambda(-t_1)}}^4)\tilde{\eta}\big)$.
}} Here we use the algebraic structure of the scaling parameter $\lambda(t)$ to infer 
\[
\big|\frac{\lambda(-t_1+t\lambda^{-1}(-t_1))}{\lambda(-t_1)} - 1\big| = O \left (\frac{t}{t_1\lambda(t_1)} \right )
\]
Then restricting $t$ to $[0, C|\log\delta_1|]$, we get the bound 
\begin{align*}
\big\|\int_0^t\frac{\sin([t-s]\sqrt{\mathcal{H}})}{\sqrt{\mathcal{H}}}F_1(s, \cdot)\,ds\big\|_{H_x^2}&\lesssim |b|\int_0^t (t-s)\frac{s}{t_1\lambda(t_1)}e^{k_d s}\,ds\\
&\ll\frac{|b|}{K}e^{k_d t}
\end{align*}

Next, we continue with the contributions of the terms $A$ and $B$: 
\\

{\it{(ii) The contribution of the term $A$}}. These terms fall under the general form 
\[
F_{2,j}: = W_{\frac{\lambda(-t_1+t\lambda^{-1}(-t_1))}{\lambda(-t_1)}}^{4-j} \eps_{\lambda^{-1}(t_1)}^j\tilde{\eta},\,0<j\leq 4
\]
Observe as in the preceding section that 
\[
\eps_{\lambda^{-1}(t_1)}(-t_1 + t\lambda^{-1}(t_1), x) = O \left (\frac{R}{(\lambda(-\tilde{t}) \tilde{t})^2} \right ),\,\tilde{t}: = -t_1 + t\lambda^{-1}(t_1)
\]
provided
\[
\lambda(\tilde{t}) |\tilde{t}| \sim t_1\lambda(t_1)\gg t,\,R\lesssim t_1\lambda(-t_1)
\]
Thus imposing these restrictions we get the uniform bounds 
\[
\|\chi_{R\lesssim t_1\lambda(-t_1)}F_{2,j}(s, \cdot)\|_{H_x^1}\lesssim  [t_1\lambda(t_1)]^{-2}|b|e^{k_d s}
\]
On the other hand, in the region $R\gtrsim t_1\lambda(-t_1)$ we can use the uniform $\dot{H}^1_{R^2 dR}\cap L^6_{R^2 dR}$-bound on $\eps_{\lambda^{-1}(t_1)}$, 
from which we infer in particular the bound 
\[
\big\|\chi_{R\gtrsim  t_1\lambda(-t_1)} \eps_{\lambda^{-1}(t_1)}\big\|_{L^\infty}\lesssim \big( t_1\lambda(-t_1)\big)^{-\frac{1}{2}}
\]
and then again 
\[
\|\chi_{R\gtrsim t_1\lambda(-t_1)}F_{2,j}(s, \cdot)\|_{H_x^1}\lesssim  [t_1\lambda(t_1)]^{-2}|b|e^{k_d s}
\]
In summary, we obtain 
\[
\big\|\int_0^t\frac{\sin([t-s]\sqrt{\mathcal{H}})}{\sqrt{\mathcal{H}}}P_{g_0^{\perp}}F_{2,j}(s, \cdot)\,ds\big\|_{H_x^2}\ll \frac{|b|}{K}e^{k_d t},\,j = 1,2,3,4
\]
provided $t\ll t_1\lambda(-t_1)$, which is the case due to $t = O(|\log\delta_1|)$ . 
\\

{\it{(ii) The contribution of the term $B$}}. Here we use the bound
\[
\|u_{II}^{5-j}\tilde{\eta}^j(s, \cdot)\|_{H_x^1}\lesssim |b|^2 e^{2k_d s}, 
\]
which leads to 
\[
\big\|\int_0^t\frac{\sin([t-s]\sqrt{\mathcal{H}})}{\sqrt{\mathcal{H}}}P_{g_0^{\perp}}\big[u_{II}^{5-j}\tilde{\eta}^j(s, \cdot)\big]\,ds\big\|_{H_x^2}\ll \frac{|b|}{K}e^{k_d t},\,j = 2,3,4,5
\]
since $|b| e^{k_d t}\leq \delta_0\ll 1$ by assumption. 
This concludes the bootstrap for the continuous spectral part $\tilde{\eta}_c$. \\
We next turn to the discrete part, i. e. the evolution of the function $\delta(t)$. As in \cite{CNLW1}, we can write 
\[
\delta(t) = (2k_d)^{-\frac{1}{2}}[n_+(t) + n_-(t)],
\]
where we have 
\[
n_{\pm}(t) = (\frac{k_d}{2})^{\frac{1}{2}}b e^{\pm k_d t} + \int_0^t e^{\pm k_d(t-s)}\langle F(s, \cdot), g_0\rangle\,ds
\]
It remains to bound the integral term in the right. We control the various ingredients of $F$: 
\\

{\it{(i)}} For the contribution of $f_1(t): = \langle 5(W^4 - W_{\frac{\lambda(-t_1+t\lambda^{-1}(-t_1))}{\lambda(-t_1)}}^4)\tilde{\eta}, g_0\rangle$, we have 
\[
\big|\int_0^t e^{\pm k_d(t-s)}f_1(s)\,ds\big|\lesssim |b|e^{k_d t}\int_0^t \frac{s}{[t_1\lambda(t_1)]^2}\,ds \ll \frac{|b|}{K}e^{k_d t}
\]
{\it{(ii)}} For the contribution of $f_2(t): = \langle A, g_0\rangle$, we obtain the exact same bound by exploiting 
\[
\big|\langle W_{\frac{\lambda(-t_1+t\lambda^{-1}(-t_1))}{\lambda(-t_1)}}^{4-j} \eps_{\lambda^{-1}(t_1)}^j\tilde{\eta}, g_0\rangle\big|\lesssim \frac{|b| e^{k_d t}}{[t_1\lambda(t_1)]^2},\,j = 1,2,3,4
\]
In effect, here one obtains exponential temporal decay in the region $R\gtrsim t_1\lambda(-t_1)$, due to the exponentially decaying $g_0$. 
\\
{\it{(iii)}} For the contribution of $f_3(t): = \langle B, g_0\rangle$, we have the bound 
\[
|f_3(t)|\lesssim b^2 e^{2k_d t},
\]
whence 
\[
\int_0^t e^{\pm k_d(t-s)} |f_3(s)|\,ds\lesssim |b|^2 e^{2k_d t}\ll \frac{|b|}{K}e^{k_d t}
\]
where we used the assumption $|b|e^{k_d t}\leq |b|e^{k_d T}\leq \delta_0$. 
\\

This concludes the bootstrap, and the lemma easily follows from this. 
\end{proof}

The precise statement which shall imply Theorem~\ref{thm:main} is now furnished by 

\begin{prop}\label{prop:key} Let $\tilde{u}(t, x)$ be the solution considered in \eqref{eq:tildeu}. Then there exists a time $T>0$ with 
\[
|b|e^{k_d T}\sim \delta_0\gg \delta_*
\]
and a decoupling 
\[
\tilde{u}(T, \cdot) = W_{\alpha_T} + \tilde{v}_{\alpha_T},\,|1-\alpha_T|\ll 1,
\]
and such that 
\[
\langle  \tilde{v}_{\alpha_T}, \Lambda^* g_{\alpha_T}\rangle = 0,\,\Lambda = R\partial_R + \frac{1}{2}
\]
and furthermore
\[
\langle \tilde{v}_{\alpha_T}, g_{\alpha_T}\rangle \sim be^{k_d T}
\]
Here $g_{\alpha_T}$ is the negative eigenmode of the linearization around $W_{\alpha_T}$. 
\end{prop}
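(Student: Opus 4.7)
The plan is to combine the ejection Lemma~\ref{lem:eject} with a standard modulation argument at the exit time. First, define $T$ by $|b| e^{k_d T} = c_0 \delta_0$ for a small absolute constant $c_0$, which gives $T \sim k_d^{-1}\log(c_0\delta_0/|b|)$. Since $|b|\leq \delta_1^{\nu N}\lambda^{-1/2}(-t_1)$ can be made arbitrarily small by taking $\delta_1$ small (or $N$ large), while $\delta_0$ is the absolute constant from Lemma~\ref{lem:eject}, we obtain $|b| e^{k_d T} \sim \delta_0 \gg \delta_*$ by arranging $\delta_* \ll \delta_0$. Moreover $T \lesssim |\log\delta_1|$ is negligible compared to $t_1\lambda(-t_1) = \nu\tau_1$, which is crucial below. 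On $[0,T]$, Lemma~\ref{lem:eject} provides
$$\delta(t) \sim be^{k_d t}, \qquad \|\tilde\eta_c(t,\cdot)\|_{H^2_x} \ll |b|e^{k_d t}.$$

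Next, set $\mu(t):=\lambda(-t_1+t\lambda^{-1}(-t_1))/\lambda(-t_1)$, so $\mu(0)=1$ and $|\mu(T)-1| \lesssim T/(t_1\lambda(-t_1)) \ll 1$. Thus
$$\tilde u(T,\cdot) = W_{\mu(T)} + \eps_{\lambda^{-1}(t_1)}(-t_1+T\lambda^{-1}(t_1),\cdot) + \tilde\eta(T,\cdot)$$
lies in a small $\dot H^1$ neighborhood of the soliton manifold $\mathcal{S}$. A standard modulation argument---the implicit function theorem applied to $\alpha\mapsto \langle \tilde u(T,\cdot) - W_\alpha, \Lambda^* g_\alpha\rangle$, with nondegeneracy provided by the nonvanishing of $\langle \Lambda W_\alpha, \Lambda^* g_\alpha\rangle$---produces a unique $\alpha_T$ with $|\alpha_T-\mu(T)|\ll 1$ such that $\tilde v_{\alpha_T}:= \tilde u(T,\cdot) - W_{\alpha_T}$ satisfies the required orthogonality. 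A quantitative IFT then gives $|\alpha_T - \mu(T)| \lesssim \|\eps\|_{L^2_{\mathrm{loc}}} + \|\tilde\eta(T,\cdot)\|_{L^2_{\mathrm{loc}}}$, where the localization is inherited from the rapid decay of $\Lambda^* g_{\mu(T)}$.

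Decomposing
$$\tilde v_{\alpha_T} = (W_{\mu(T)}-W_{\alpha_T}) + \eps_{\lambda^{-1}(t_1)}(\cdot) + \delta(T)\, g_0 + \tilde\eta_c(T,\cdot)$$
and pairing with $g_{\alpha_T}$, the main term $\delta(T)\langle g_0, g_{\alpha_T}\rangle = (\|g_0\|_{L^2}^2 + o(1))\delta(T) \sim be^{k_d T}$ dominates. The $\tilde\eta_c$ contribution is bounded by $\|\tilde\eta_c(T,\cdot)\|_{L^2}\|g_{\alpha_T}\|_{L^2} \ll |be^{k_d T}|$. The $\eps$ contribution exploits the exponential localization of $g_{\alpha_T}$: since $\alpha_T \approx 1$, the effective support is $R \lesssim 1$, lying deep inside the light cone (as $t_1\lambda(-t_1)\gg 1$), where $|\eps_{\lambda^{-1}(t_1)}| \lesssim (t_1\lambda(-t_1))^{-2}$, yielding a bound $\ll |be^{k_d T}|$. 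Finally, the scaling discrepancy $\langle W_{\mu(T)}-W_{\alpha_T}, g_{\alpha_T}\rangle$ equals $(\alpha_T-\mu(T))\langle\Lambda W, g_0\rangle + O((\alpha_T-\mu(T))^2)$, and since $\Lambda W \in \ker\mathcal H$ while $\mathcal H g_0 = -k_d^2 g_0$ with $k_d\neq 0$, spectral orthogonality of self-adjoint eigenfunctions forces $\langle\Lambda W, g_0\rangle=0$, leaving only a quadratic error in $|\alpha_T-\mu(T)|$, again $\ll |be^{k_d T}|$.

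The main obstacle is precisely controlling this last scaling-error term: a naive estimate of order $|\alpha_T-\mu(T)|\cdot O(1)$ need not be small compared to $|be^{k_d T}|$. The resolution exploits the spectral orthogonality of the zero mode $\Lambda W$ and the negative mode $g_0$ of $\mathcal H$, which collapses the dangerous linear contribution. Combined with the smallness of $\eps$ in the effective support of $g_{\alpha_T}$ and the $H^2_x$ smallness of $\tilde\eta_c$, this yields $|\langle \tilde v_{\alpha_T}, g_{\alpha_T}\rangle - \|g_0\|_{L^2}^2\delta(T)| \ll |be^{k_d T}|$, hence $\langle \tilde v_{\alpha_T}, g_{\alpha_T}\rangle \sim be^{k_d T}$, as required.
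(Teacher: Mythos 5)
Your proposal follows essentially the same route as the paper's proof: fix $T$ by $|b|e^{k_d T}\sim\delta_0$, apply the implicit function theorem to the orthogonality functional $\alpha\mapsto\langle\tilde u(T,\cdot)-W_\alpha,\Lambda^*g_\alpha\rangle$ using the non-degeneracy $|\langle\partial_\lambda W_\lambda,\Lambda^*g_\lambda\rangle|\sim1$, and then estimate the pairing with $g_{\alpha_T}$ term by term. The one place where you go noticeably further than the paper is in justifying why $\langle W_{\mu(T)}-W_{\alpha_T},g_{\alpha_T}\rangle$ is quadratic in $|\alpha_T-\mu(T)|$ (hence $O(\delta_0^2)$): the paper simply asserts the $\delta_0^2$ bound, whereas you correctly trace it to the self-adjoint spectral orthogonality $\langle\Lambda W_{\alpha_T},g_{\alpha_T}\rangle=0$ between the zero mode and the negative eigenmode of the linearized operator, which kills the linear term in the Taylor expansion. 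This is a genuine detail worth spelling out, and your treatment of the $\eps$ contribution via exponential localization of $g_{\alpha_T}$ together with the interior-of-the-light-cone pointwise bound on $\eps$ is likewise a correct elaboration of what the paper leaves implicit.
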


\begin{proof} The argument here is essentially the same as in \cite{CNLW3}. In light of \eqref{eq:tildeu}, we have to satisfy the vanishing condition 
\begin{equation}\label{eq:van}
\langle W_{\frac{\lambda(-t_1+T\lambda^{-1}(-t_1))}{\lambda(-t_1)}} -  W_{\alpha_T} + \eps_{\lambda^{-1}(t_1)}(-t_1 + T\lambda^{-1}(-t_1), x) + \tilde{\eta}(T, \cdot),\, \Lambda^* g_{\alpha_T}\rangle = 0
\end{equation}
with $T$ chosen to satisfy $|b|e^{k_d T}\sim \delta_0$. Since we have the bound 
\[
\big\|\eps_{\lambda^{-1}(t_1)}(-t_1 + T\lambda^{-1}(-t_1), x) + \tilde{\eta}(T, \cdot)\big\|_{\dot{H}^1_{R^2 dR}}\lesssim \delta_0\ll 1,
\]
and the non-degeneracy condition 
\[
\big|\langle \partial_{\lambda}W_{\lambda}|_{\lambda = 1},\,\Lambda^*g_{\lambda}|_{\lambda=1}\rangle\big|\sim 1
\]
holds, see \cite{CNLW3}, an application of the implicit function theorem implies the existence of $\alpha_T$ $\delta_0$-close to $1$ such that \eqref{eq:van} is satisfied. 
Furthermore, since 
\[
\big|\langle W_{\frac{\lambda(-t_1+T\lambda^{-1}(-t_1))}{\lambda(-t_1)}} -  W_{\alpha_T}, g_{\alpha_T}\rangle\big|\sim \delta_0^2,
\]
we also find 
\begin{align*}
&\langle W_{\frac{\lambda(-t_1+T\lambda^{-1}(-t_1))}{\lambda(-t_1)}} -  W_{\alpha_T} + \eps_{\lambda^{-1}(t_1)}(-t_1 + T\lambda^{-1}(-t_1), x) + \tilde{\eta}(T, \cdot),\, g_{\alpha_T}\rangle\\
&\sim be^{k_d T},
\end{align*}
as desired.  
\end{proof}

\section{Proof of Theorem~\ref{thm:main}}

Having fixed a very small $\delta_1>0$ and constructed the solution $\tilde{u}$ on the time interval $[0, T]$ as in the preceding proposition, and recalling that this solution, when re-scaled by $\lambda(-t_1)$, can be extended from time $-t_1$ backwards to time $-t_0$ as described in the last section but one (i. e. the solution $u_{II}+\eta$ constructed there), a simple continuous dependence argument reveals that perturbing the data  of $u_{II}+\eta$ at time $t = -t_0$ by a sufficiently small amount in the energy topology, we obtain another solution which extends to time $t = -t_1$ and such that re-scaling and shifting the time origin at to time $t = t_1$ as in the preceding section, the corresponding solution also extends all the way up to time $t = T$, and satisfies the conclusion of Proposition~\ref{prop:key}. Now let $\tilde{u}(t, x)$ be as in the preceding section. Then denoting $\mathcal{S}: = \{(W_{\lambda}, 0)\}_{\lambda>0}$, we have\footnote{We denote $u[t] = \big(u(t, \cdot), \partial_t u(t, \cdot)\big)$}
\[
\text{dist}_{\dot{H}^1\times L^2}\big(\tilde{u}[0], \mathcal{S}\cup -\mathcal{S}\big)\lesssim \delta_*;
\]
In fact, this can be arranged by picking $\delta_1$ small enough. On the other hand, by Proposition~\ref{prop:key}, we have 
\[
\text{dist}_{\dot{H}^1\times L^2}\big(\tilde{u}[T], \mathcal{S}\cup -\mathcal{S}\big)\sim \delta_0\gg \delta_*
\]
This is a consequence of \cite{CNLW1}, Lemma 2.2. But then equation (3.44) as well as Proposition 5.1, Proposition 6.2 in \cite{CNLW1} imply that picking $a<0$ in the definition of $\eta_{hyp}$ in the last section but one leads to a solution $\tilde{u}(t, x)$ which exists globally in forward time and scatters towards zero. On the other hand, picking $a>0$ leads to $\tilde{u}(t, x)$ blowing up in finite forward time.   
Since we have 
\[
\big\| \tilde{u}_{II} + \eta(t, x) - \big(W_{\lambda(t)}+\eps(t, x)\big)[-t_0]\big\|_{\dot{H^1}\times L^2}\lesssim \delta_1^{\nu}
\]
according to Proposition~\ref{prop:stable} and $\delta_1>0$ was arbitrary, Theorem~\ref{thm:main} is proved.

\centerline{\scshape Joachim Krieger }
\medskip
{\footnotesize
 \centerline{B\^{a}timent des Math\'ematiques, EPFL}
\centerline{Station 8, 
CH-1015 Lausanne, 
  Switzerland}
  \centerline{\email{joachim.krieger@epfl.ch}}
} 

\medskip

\centerline{\scshape Joules Nahas}
\medskip
{\footnotesize
 \centerline{B\^{a}timent des Math\'ematiques, EPFL}
\centerline{Station 8, 
CH-1015 Lausanne, 
  Switzerland}
  \centerline{\email{joules.nahas@epfl.ch}}
} 

\bigskip

\end{document}